\documentclass[12pt,reqno,oneside]{amsart}
\usepackage{amsmath,amsthm,amsfonts,amssymb,graphicx}
\usepackage[mathscr]{eucal}
\usepackage{bbm}
\usepackage{indentfirst}
\usepackage{url}

\theoremstyle{plain}
\newtheorem{teo}{Theorem}[section]

\newtheorem{lem}[teo]{Lemma}
\newtheorem{prop}[teo]{Proposition}

\theoremstyle{definition}
\newtheorem{defn}[teo]{Definition}

\newtheorem{rem}[teo]{Remark}

\numberwithin{equation}{section}

\def\bb1{{\mathbbm{1}}}

\setlength{\textwidth}{160mm}
\setlength{\textheight}{210mm}
\setlength{\oddsidemargin}{6mm}
\setlength{\evensidemargin}{6mm}
\setlength{\topmargin}{5mm}

\begin{document}
	\baselineskip=22pt
	\title[Frog model on $\mathbb{Z}$ with random survival parameter]{Frog model on $\mathbb{Z}$ with random survival parameter}
	\author{Gustavo~O.~de Carvalho}
	\author{F\'abio~P.~Machado}
	\address[F\'abio~P.~Machado]
	{Institute of Mathematics and Statistics
		\\ University of S\~ao Paulo \\ Rua do Mat\~ao 1010, CEP
		05508-090, S\~ao Paulo, SP, Brazil - fmachado@ime.usp.br}
	\noindent
	\address[Gustavo~O.~de Carvalho]
	{Institute of Mathematics and Statistics
		\\ University of S\~ao Paulo \\ Rua do Mat\~ao 1010, CEP
		05508-090, S\~ao Paulo, SP, Brazil - gustavoodc@ime.usp.br}
	\noindent
	\thanks{Research supported by CAPES (88887.676435/2022-00), FAPESP (23/13453-5)}
	\keywords{frog model, random walks system, percolation.}
	\subjclass[2020]{60K35, 05C81}
	
	\date{\today}

\begin{abstract}
We study the frog model on \( \mathbb{Z} \) with geometric lifetimes, introducing a random survival parameter. Active and inactive particles are placed at the vertices of \( \mathbb{Z} \). The lifetime of each active particle follows a geometric random variable with parameter \( 1-p \), where \( p \) is randomly sampled from a distribution \( \pi \). Each active particle performs a simple random walk on \( \mathbb{Z} \) until it dies, activating any inactive particles it encounters along its path. In contrast to the usual case where \( p \) is fixed, we show that there exist non-trivial distributions \( \pi \) for which the model survives with positive probability. More specifically, for $\pi\sim Beta(\alpha,\beta)$, we establish the existence of a critical value \( \beta=0.5 \), that separates almost sure extinction from survival with positive probability. Furthermore, we show that the model is recurrent whenever it survives with positive probability.
\end{abstract}

\maketitle

\section{Introduction and results}

We study the frog model, a branching random walk process often associated with the spread of a rumor or a disease in a population. Initially, a random number of particles are placed at each vertex of a connected graph $\mathcal{G}$, where one vertex is designated as the root. At time 0, particles at the root are active, while all others are inactive. Active particles perform a simple symmetric random walk on $\mathcal{G}$, activating any inactive particles they encounter along their path.

At every time step, each active particle may die with probability $1-p$ before jumping,
in which case it is removed from the model. Consequently, the lifetime of each active particle follows a geometric distribution with parameter $1-p$. The case where particles never die can be incorporated by setting $p=1$.

There has been considerable interest in identifying conditions under which the probability of survival is positive, i.e., ensuring that at all times at least one active particle remains. In \cite{phase_transition}, the authors proved that the process undergoes a phase transition in the lattice $\mathbb{Z}^d$ for $d \geq 2$ and in $\mathbb{T}^d$, the $d$-homogeneous tree, for $d \geq 3$, whereas in $\mathbb{Z}$, they showed that, under mild conditions on the initial number of particles per vertex, there is no phase transition. 

Following this initial work, several studies have further investigated $p_c$, the critical parameter, in graphs where a phase transition occurs \cite{phase_transition2,phase_transition3,phase_transition4,phase_transition5}.  
In \cite{frog_model_z}, the authors show that the frog model can survive in $\mathbb{Z}$ with positive probability, provided that either the random walks have a drift or the survival parameter $p=p_x$ varies appropriately with the initial position $x\in \mathbb{Z}$ of each particle.  

In this paper, we construct a particle-wise disorder setup to achieve survival with positive probability in $\mathbb{Z}$. This serves as an illustration of how introducing randomness into a parameter or the environment can significantly alter the properties of a model.

Other properties related to the frog model on $\mathbb{Z}$ have also been studied \cite{z_drift,z_coexistence,z_explosion}. In \cite{recorrencia_drift, frog_model_z}, the authors analyze the frog model on $\mathbb{Z}$ with drift and other variations, primarily focusing on recurrence versus transience, i.e., whether there are particles returning to the root infinitely often. In this paper, we further prove that, given survival, the frog model on $\mathbb{Z}$ is recurrent.

We now formally define the frog model. Since some of our results are not restricted to the geometric lifetime case, we start by defining the model with a general lifetime distribution. Let $\mathbb{N}=\{1,2,3,\dots\}$ and define $\mathbb{N}_0=\mathbb{N}\cup\{0\}$.  
Let $\{\eta_x\}_{x\in \mathbb{Z}}$ and $\{L_{x,i}\}_{x\in\mathbb{Z},i\in \mathbb{N}}$ be independent collections of i.i.d. random variables. Let $\{(S^{x,i}_n)_{n\in \mathbb{N}_0}\}_{x\in \mathbb{Z},i\in \mathbb{N}}$, with $S^{x,i}_0=x$, be a collection of independent simple symmetric random walks on $\mathbb{Z}$.  
	
For each $x\in \mathbb{Z}$, $\eta_x$ denotes the initial number of particles at vertex $x$.  
For each particle $i$ ($i\in\{1,\dots,\eta_x\}$) starting at vertex $x\in \mathbb{Z}$, $(S^{x,i}_n)_{n\in\mathbb{N}_0}$ denotes its random walk and $L_{x,i}$ is its lifetime.

We denote by $\text{FM}(\mathbb{Z},L,\eta)$ the frog model on $\mathbb{Z}$, where $L$ relates to the lifetime of each particle and $\eta$ determines the initial number distribution of particles per vertex.  
	
\begin{defn}
	A realization of the frog model is said to survive if at every time step there is at least one active particle. Otherwise, it dies out.
\end{defn}
	
It is also useful to describe the survival of the frog model in terms of an oriented percolation model. Let  
\[
A(x):= \{y \in \mathbb{Z}: S^{x,i}_n = y \text{ for some } i \in \{1, ..., \eta_x\} \text{ and } n \in \{0, ..., L_{x,i}\}\}
\]
be the set of vertices that can potentially be visited by particles initially placed at $x$. The vertices in $A(x)$ will effectively be visited if $x$ is visited and its particles are activated.  
	
Therefore, a realization of the frog model survives if and only if there exists an infinite sequence of distinct vertices $0=x_0,x_1,x_2,\dots$ in $\mathbb{Z}$ such that $x_j \in A(x_{j-1})$ for all $j \in \mathbb{N}$. 

We now introduce some random variables that will be useful for setting results on survival or extinction. Let
\begin{equation}\label{eq:d->}
	D_{z,i}^{\rightarrow}:=\max_{n\in \{0,...,L_{z,i}\}}\{S^{z,i}_n-z\}
	\hspace{.5cm}\text{ and } \hspace{.5cm}
	D_{z,i}^{\leftarrow}:=\max_{n\in \{0,...,L_{z,i}\}}\{z-S^{z,i}_n\}
\end{equation}
represent the maximum distance reached by the $i$-th particle starting at $z$ in the rightward (positive) and leftward (negative) directions, respectively.  

Additionally, define
\[
D^*_{z,i}:=\max_{n\in \{0,...,L_{z,i}\}}\{|S^{z,i}_n-z|\}=D_{z,i}^{\rightarrow}\vee D_{z,i}^{\leftarrow}
\]
as the maximum displacement of the $i$-th particle from $z$ in either direction.  

Both $\{D^\rightarrow_{z,i}\}_{z\in \mathbb{Z},i\in\mathbb{N}}$ and $\{D^*_{z,i}\}_{z\in \mathbb{Z},i\in\mathbb{N}}$ form collections of i.i.d. random variables. We denote their representative random variables by $D^\rightarrow$ and $D^*$, respectively.

\begin{prop}\label{prop:bz13} The following items hold:
\begin{itemize}
    \item[(i)] If $E(\eta)<\infty$ and $\limsup_{n\to \infty}nP(D^*\geq n)<\frac{1}{2E(\eta)}$, then
\[P(\text{FM}(\mathbb{Z},L,\eta)\text{ survives})=0.\]
    \item[(ii)] If $\liminf_{n\to \infty}nP(D^{\rightarrow}\geq n)>1/E(\eta)$ (where $1/E(\eta):=0$ when $E(\eta)=\infty$), then
\[P(\text{FM}(\mathbb{Z},L,\eta)\text{ survives})>0.\]
\end{itemize}
\end{prop}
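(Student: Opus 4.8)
The plan is to exploit the nearest–neighbour nature of the walks: the range of a single walk up to its lifetime is an interval, so
$$A(x) = [x - L_x,\; x + R_x], \qquad R_x := \max_{1\le i\le \eta_x} D^{\rightarrow}_{x,i},\quad L_x := \max_{1\le i\le \eta_x} D^{\leftarrow}_{x,i},$$
is a random interval containing $x$. Hence the set of activated vertices is itself an interval $[-\mathcal{L},\mathcal{R}]$ (an increasing union of overlapping intervals all containing $0$), and the model survives iff this interval is unbounded, i.e. $\mathcal{R}=\infty$ or $\mathcal{L}=\infty$. I would first record the elementary tail estimates, valid for $R_x$, symmetrically for $L_x$, and for $R^*_x=\max_i D^*_{x,i}$,
$$P(R_x\ge n)\le E(\eta)\,P(D^{\rightarrow}\ge n), \qquad P(R_x\ge n)= 1-E\big[(1-P(D^{\rightarrow}\ge n))^{\eta}\big]= (1+o(1))\,E(\eta)\,P(D^{\rightarrow}\ge n),$$
the first by a union bound and the second by expanding $(1-t)^{\eta}$ (this needs $E(\eta)<\infty$; when $E(\eta)=\infty$ the lower estimate is replaced by $nP(R_x\ge n)\to\infty$). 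The hypotheses thus become tail statements about $R_x$: in (i), $\limsup_n nP(R^*_x\ge n)<\tfrac12$, and in (ii), $\liminf_n nP(R_x\ge n)>1$.

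For (ii) I would run a second–moment argument that builds a reach purely to the right, which is exactly why only $D^{\rightarrow}$ and the constant $1/E(\eta)$ enter. Call $0=x_0<x_1<\dots<x_k=N$ a right path if $x_j\in A(x_{j-1})$, i.e. $R_{x_{j-1}}\ge x_j-x_{j-1}$, and let $Z_N$ count the right paths from $0$ to $N$; note $\{\mathcal{R}\ge N\}\supseteq\{Z_N\ge 1\}$. Since the increments use the independent randomness at the distinct starting vertices, $E(Z_N)$ is a renewal–type sum whose one–step weights behave like $c/d$ with $c=\liminf_n nP(R_x\ge n)>1$, which forces $E(Z_N)\to\infty$. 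I would then bound $E(Z_N^2)$ by splitting pairs of paths at their last common vertex and, using $c>1$ to control the resulting convolution, prove $E(Z_N^2)\le C\,E(Z_N)^2$ uniformly in $N$; Paley–Zygmund then gives $P(Z_N\ge 1)\ge 1/C>0$ for all $N$. As $\{\mathcal{R}\ge N\}$ is decreasing in $N$, the intersection $\{\mathcal{R}=\infty\}$ has probability at least $1/C>0$, so the model survives. The second–moment correlation bound, degrading precisely as $c\downarrow 1$, is where the threshold $1/E(\eta)$ is genuinely used and is the main obstacle.

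For (i) I would produce, almost surely, a gap that the cluster never crosses. By symmetry and a union bound $P(\text{survives})\le 2\,P(\mathcal{R}=\infty)$, so it suffices to show $\mathcal{R}<\infty$ a.s.; the factor $2$ and the appearance of $D^*$ rather than $D^{\rightarrow}$ reflect that a vertex may be entered from either side and that crossings in both directions must be excluded. The quantitative input is that, under $\limsup_n nP(D^*\ge n)<\tfrac1{2E(\eta)}$, the expected number of activated intervals straddling a far location is small enough that a Borel–Cantelli estimate over the gaps of a distant block yields, almost surely, a gap to the right of $0$ crossed by no activated interval. That gap bounds $\mathcal{R}$; the symmetric statement bounds $\mathcal{L}$; and a bounded activated interval contains only finitely many particles of finite lifetime, so the process dies out.

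The delicate point in (i) is that the naive first moment diverges, since $E(D^*)$ may be infinite: one cannot merely count paths or vertices, but must use the interval (self–avoiding) structure so that only the genuinely long intervals crossing a far gap contribute, and it is exactly this restricted count that converges under the constant $\tfrac1{2E(\eta)}$. Controlling this heavy–tailed first moment, rather than the surrounding reductions, is where the real work lies; together with the $c>1$ second–moment bound in (ii), it also explains the factor–$2$ gap between the two sufficient conditions as the usual first–versus–second moment discrepancy.
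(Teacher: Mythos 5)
Both of the steps that you yourself flag as ``the main obstacle'' are genuine gaps, and in part (ii) the key claim is in fact false in the regime that matters. The crux of your (ii) is the uniform second-moment bound $E(Z_N^2)\le C\,E(Z_N)^2$, and this fails when $c:=E(\eta)\liminf_n nP(D^{\rightarrow}\ge n)$ is only slightly larger than $1$ --- exactly the regime the proposition must cover. Write $q_d=P(R_x\ge d)$ and take $q_d=c/d$ for concreteness; then $E(Z_N)$ grows like $\rho^{-N}$, where $\rho$ solves $\sum_d q_d\rho^d=1$, and under the corresponding tilted measure a path is a renewal process with step law $p_d=q_d\rho^d$. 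Two independent such paths share a positive density of vertices, and each shared vertex with outgoing jumps $d,d'$ contributes the correlation factor $q_{\max(d,d')}/(q_dq_{d'})=\min(d,d')/c$, whose mean under $p\otimes p$ exceeds $1$ when $c$ is near $1$: at $c=1$, $\rho=1-e^{-1}$, one computes $\sum_d q_d\rho^{2d}+\sum_{d\ne d'}q_{\max(d,d')}\rho^{d+d'}\approx 1.17>1$, so the pair generating function blows up strictly faster than $\rho^{-2N}$. Hence $E(Z_N^2)/E(Z_N)^2$ grows exponentially in $N$ and Paley--Zygmund gives nothing; no amount of care with the ``last common vertex'' decomposition repairs this, because the divergence is driven by the positive density of collisions, not by bookkeeping. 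The paper does not count paths at all: it dominates the frog model from below by the uni-directional firework process with radii $D^{\rightarrow}$ and invokes the exact percolation criterion $\sum_n\prod_{i=0}^nP(I\le i)<\infty$ of Lemma \ref{lem:fire_bifire}(i) (proved in \cite{firework} through the interval structure of the informed set), combined with the Kummer-test computation behind Lemma \ref{lem:liminf_limsup}(iii); that is how the sharp constant $1/E(\eta)$ is reached.

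Part (i) has a circularity that your sketch does not resolve. Under the hypothesis the tails may still satisfy $P(D^*\ge n)\asymp c'/n$, so $\sum_{y\le m}P(y+R^*_y>m)=\infty$ and almost surely \emph{every} gap is crossed by infinitely many vertices to its left; as you note, the Borel--Cantelli count must therefore be restricted to activated vertices. But the density of activated vertices far to the left of a prospective gap is governed by the event $\{\mathcal{L}=\infty\}$, whose probability equals $P(\mathcal{R}=\infty)$ by symmetry --- the very quantity you are trying to prove is zero. Your outline gives no mechanism to rule out the event $\{\mathcal{R}=\mathcal{L}=\infty\}$, on which the two halves of the line feed each other across the origin. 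The paper's mechanism is the mirroring coupling in the proof of Lemma \ref{lem:fire_bifire}(iii): replacing the radii by $I^*_x=I^*_{-x}=\max\{I_x,I_{-x}\}$ dominates the bi-directional process by a folded uni-directional one with $P(I^*\le i)=P(I\le i)^2$, equivalently with spreader counts $N^*_z=N_z+N_{-z}$, and the constant $\tfrac{1}{2E(\eta)}$ is precisely $1/E(N^*)$. In particular, your explanation of the factor $2$ (a union bound over the two directions, plus ``crossings in both directions must be excluded'') is not where it comes from: a union bound on probabilities cannot change a threshold constant, whereas folding the line doubles the effective particle density per site.
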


Even though Proposition \ref{prop:bz13} brings a general overview of survival and extinction, it may be hard to determine the limits above for some $L$. Next, we focus on the geometric case and we add randomness to the survival parameter of each particle. Let $\{\pi_{x,i}\}_{x\in\mathbb{Z},i\in\mathbb{N}}$ be a collection of i.i.d. random variables assuming values in $(0,1]$. The geometric case is considered by setting
\[
P(L_{x,i}=k|\pi_{x,i}=p)=(1-p)p^k\mathbbm{1}_{(k\in\mathbb{N}_0)},
\]
which, for shortness, will be denoted as $L=\mathcal{G}_\pi$. For any $p\in(0,1]$, we also write $L=\mathcal{G}_p$ to denote the constant case $P(\pi=p)=1$.

Let $a\vee b$ denote $\max\{a,b\}$. Under the mild condition $E(\log(1\vee\eta))<\infty$, \cite[Theorem 1.1]{phase_transition} shows that for any $p\in(0,1)$, 
\begin{equation}\label{eq:amp02}
	P(\text{FM}(\mathbb{Z},\mathcal{G}_p,\eta)\text{ survives})=0.
\end{equation}

The previous result assumes that $\pi$ is almost surely constant. Introducing randomness in $\pi$ may cause this almost sure extinction to fail in some cases, which is our main goal. We first present trivial survival results for certain distributions $\pi$.

If $P(\pi=1)>0$, it is straightforward to see that $P(\text{FM}(\mathbb{Z},\mathcal{G}_\pi,\eta)\text{ survives})>0$ whenever $P(\eta=0)<1$, as each particle has a chance to live forever and perform an infinite random walk on $\mathbb{Z}$. Moreover, if there exists $a<1$ such that $P(\pi\in[0,a])=1$, then $P(\text{FM}(\mathbb{Z},\mathcal{G}_\pi, \eta)\text{ survives})=0$ whenever $E(\log(1\vee \eta))<\infty$. This follows from coupling the processes in such a way that $\text{FM}(\mathbb{Z},\mathcal{G}_\pi,\eta)$ is dominated by $\text{FM}(\mathbb{Z},\mathcal{G}_a,\eta)$ and applying (\ref{eq:amp02}) with $p =a$.

Among distributions $\pi$ that do not exhibit the aforementioned trivialities, we focus in particular on the Beta family. 

\begin{defn}
	We say that $\pi\sim Beta(\alpha,\beta)$, where $\alpha>0$ and $\beta>0$, if its density function is given by
	\[
	f_{\pi}(x)=\frac{x^{\alpha-1}(1-x)^{\beta-1}}{B(\alpha,\beta)}\mathbbm{1}_{(x\in (0,1))},
	\]
	where $B(\alpha,\beta)=\int_{0}^1 x^{\alpha-1}(1-x)^{\beta-1}dx$ is its normalizing constant.
\end{defn}

This family encompasses a wide variety of distributions in $(0,1)$ that are mathematically well-suited for our calculations. However, it is worth noting that survival results can be easily extended through couplings to stochastically larger (or smaller, in the case of almost sure extinction) non-Beta distributions.

We now state theorems describing the survival and extinction of the frog model as a function of 
$\alpha$ and $\beta$.

\begin{teo}\label{teo:>1}
    Consider that $\pi\sim Beta(\alpha,\beta)$ and $E(\eta)<\infty$. If $\alpha>0$ and $\beta>0.5$, then
    \[P(\text{FM}(\mathbb{Z},\mathcal{G}_\pi,\eta)\text{ survives})=0.\]
\end{teo}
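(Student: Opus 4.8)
The plan is to deduce extinction directly from Proposition~\ref{prop:bz13}(i). Since $E(\eta)<\infty$ is assumed, it suffices to prove that
\[
\limsup_{n\to\infty} n\, P(D^*\ge n)=0,
\]
because $0$ is strictly smaller than $1/(2E(\eta))$ whenever $E(\eta)\in(0,\infty)$ (the case $E(\eta)=0$ being trivial). By the reflection symmetry of the simple symmetric walk, $D^{\leftarrow}$ and $D^{\rightarrow}$ have the same law, so $P(D^*\ge n)\le P(D^{\rightarrow}\ge n)+P(D^{\leftarrow}\ge n)=2\,P(D^{\rightarrow}\ge n)$, and the whole argument reduces to controlling $P(D^{\rightarrow}\ge n)$.

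The next step is to compute $P(D^{\rightarrow}\ge n)$ by conditioning on the survival parameter. Fix $\pi=p$ and let $\theta_n$ be the first-passage time of $(S^{z,i}_m-z)_m$ to level $n$. Because the geometric lifetime is independent of the trajectory, $P(L_{z,i}\ge \theta_n\mid \theta_n)=p^{\theta_n}$, whence
\[
P(D^{\rightarrow}\ge n\mid \pi=p)=E\big[p^{\theta_n}\big]=\phi(p)^n,
\qquad \phi(p):=\frac{1-\sqrt{1-p^2}}{p},
\]
using the classical identity $E[s^{\theta_1}]=(1-\sqrt{1-s^2})/s$ for the simple symmetric walk together with $\theta_n=\theta_1^{(1)}+\cdots+\theta_1^{(n)}$ (a sum of $n$ i.i.d. copies, by the strong Markov property and translation invariance). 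Integrating against the law of $\pi$ yields $P(D^{\rightarrow}\ge n)=E[\phi(\pi)^n]$, so the task becomes the asymptotic evaluation of $E[\phi(\pi)^n]$ as $n\to\infty$.

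The decisive feature is that $\phi(p)<1$ for $p\in(0,1)$ while $\phi(1)=1$, so the mass of $E[\phi(\pi)^n]=\int_0^1\phi(p)^n f_\pi(p)\,dp$ concentrates near $p=1$. Writing $p=1-u$ one has $\phi(1-u)=1-\sqrt{2u}+O(u)$, so $\phi(1-u)^n=\exp(-n\sqrt{2u}+O(nu))$, while the Beta density behaves like $u^{\beta-1}/B(\alpha,\beta)$ near $u=0$. A Watson/Laplace-type analysis, with the substitution $v=\sqrt{2u}$ linearizing the exponent, then gives
\[
E[\phi(\pi)^n]\sim \frac{2^{1-\beta}\,\Gamma(2\beta)}{B(\alpha,\beta)}\,n^{-2\beta},
\qquad n\to\infty .
\]
In particular $E[\phi(\pi)^n]=O(n^{-2\beta})$, so $n\,P(D^*\ge n)\le 2n\,E[\phi(\pi)^n]=O(n^{\,1-2\beta})\to 0$ precisely because $\beta>0.5$, which is exactly the threshold $1-2\beta<0$. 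This verifies the hypothesis of Proposition~\ref{prop:bz13}(i) and gives $P(\text{FM}(\mathbb{Z},\mathcal{G}_\pi,\eta)\text{ survives})=0$.

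The step I expect to be the main obstacle is making the estimate of $E[\phi(\pi)^n]$ rigorous. For the theorem only the upper bound $E[\phi(\pi)^n]=O(n^{-2\beta})$ is needed, which is softer than the full asymptotic equivalence. A convenient route is to verify that $\phi(p)\le e^{-c\sqrt{1-p}}$ on $(0,1)$ for some constant $c>0$: the ratio $-\log\phi(p)/\sqrt{1-p}$ is continuous and strictly positive on $(0,1)$, tends to $\sqrt 2$ as $p\to1$ and to $+\infty$ as $p\to0$, hence is bounded below. Substituting this bound into $\int_0^1\phi(p)^n f_\pi(p)\,dp$, discarding the region $p\le 1/2$ (where the integrand is exponentially small in $n$ and $p^{\alpha-1}(1-p)^{\beta-1}$ is integrable) and changing variables $v=\sqrt{1-p}$ on the remaining region reduces the estimate to a gamma integral of order $n^{-2\beta}$. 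The only delicate point is bookkeeping the exponent of $n$: it comes out as exactly $2\beta$ through the pairing of the square-root singularity of $\phi$ at $p=1$ with the $(1-p)^{\beta-1}$ singularity of the Beta density, which is what pins the transition at $\beta=0.5$.
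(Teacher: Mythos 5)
Your proposal is correct, and its skeleton coincides with the paper's: both reduce the theorem to showing $n\,P(D^{\rightarrow}\geq n)\to 0$ via Proposition~\ref{prop:bz13}(i) and the symmetry bound $P(D^{\rightarrow}\geq n)\geq \tfrac12 P(D^*\geq n)$, and both then estimate the Beta integral of $\phi(p)^n$ with $\phi(p)=(1-\sqrt{1-p^2})/p$ (you rederive this formula from first-passage generating functions; the paper simply cites it as Lemma~\ref{lem:dist_d}). Where you genuinely diverge is in the analytic core. The paper bounds $\phi(p)\leq(1-\sqrt{1-p})/p$, splits the integral at the $n$-dependent point $z=1-n^{-\gamma}$ with $\gamma\in(\max\{1/\beta,1\},2)$, and kills each piece with the elementary limits of Lemma~\ref{lema:euler}; no rate of decay is extracted, only that the product tends to $0$. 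You instead establish the uniform domination $\phi(p)\leq e^{-c\sqrt{1-p}}$ on $(0,1)$ (the infimum argument for $-\log\phi(p)/\sqrt{1-p}$ is sound: the ratio is continuous, tends to $\sqrt2$ at $p=1$ and to $+\infty$ at $p=0$), discard $p\leq 1/2$ where the integrand is $O(e^{-cn/\sqrt2})$, and use the substitution $v=\sqrt{1-p}$ to land on a Gamma integral, yielding the quantitative bound $n\,P(D^{\rightarrow}\geq n)=O(n^{1-2\beta})$. Your route is a Laplace/Watson-type argument: it costs a compactness lemma but buys the sharp polynomial rate $n^{-2\beta}$ (and, heuristically, the exact constant $2^{1-\beta}\Gamma(2\beta)/B(\alpha,\beta)$), which makes completely transparent why the transition sits at $\beta=0.5$ and would plug directly into the survival side of the analysis; the paper's route avoids any infimum/compactness step and stays within explicit calculus limits, at the price of an $n$-dependent cut point and no rate information. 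Both are complete proofs of the statement.
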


\begin{teo}\label{teo:<0.5}
    Consider that $\pi\sim Beta(\alpha,\beta)$ and $P(\eta=0)<1$. Then,
    \[P(\text{FM}(\mathbb{Z},\mathcal{G}_\pi,\eta)\text{ survives})>0\]
    in any of the following cases:
    \begin{itemize}
        \item $\alpha>0$ and $\beta<0.5$
        \item $\alpha>\alpha_0$ and $\beta=0.5$, where $\alpha_0=\alpha_0(E(\eta))\geq 0$ is a function defined for $E(\eta)\in (0,+\infty)\cup\{+\infty\}$ given by
    \end{itemize}
    \[\alpha_0(E(\eta)):=
        \inf\{\alpha>0:B(\alpha,0.5)<E(\eta)\sqrt{2}\}.\]
\end{teo}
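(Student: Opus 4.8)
The plan is to apply Proposition~\ref{prop:bz13}(ii), so the whole problem reduces to estimating the tail $P(D^\rightarrow \geq n)$ and showing that $\liminf_{n\to\infty} n\,P(D^\rightarrow \geq n) > 1/E(\eta)$. First I would condition on $\pi = p$ and compute the tail of the maximal rightward displacement exactly. Writing $r(p)$ for the probability that a walk killed at each step with probability $1-p$ ever reaches level $+1$ before dying, a one-step analysis (survive with probability $p$, then either step right and succeed, or step left and then climb two levels) gives the quadratic $p\,r^2 - 2r + p = 0$, hence
\[
r(p) = \frac{1-\sqrt{1-p^2}}{p}.
\]
By the strong Markov property and the memorylessness of the killing, reaching level $n$ amounts to $n$ independent successful unit climbs, so $P(D^\rightarrow \geq n \mid \pi = p) = r(p)^n$, and therefore $P(D^\rightarrow \geq n) = \int_0^1 r(p)^n f_\pi(p)\,dp$.

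Next I would linearize the integral by the substitution $s = r(p)$, which inverts to $p = 2s/(1+s^2)$ and yields the pleasant identities $1-p = (1-s)^2/(1+s^2)$ and $dp = 2(1-s^2)(1+s^2)^{-2}\,ds$. Substituting into the Beta density collapses the tail into a single Beta-type integral
\[
P(D^\rightarrow \geq n) = \frac{2^\alpha}{B(\alpha,\beta)} \int_0^1 s^{\,n+\alpha-1}(1-s)^{2\beta-1}\,g(s)\,ds, \qquad g(s) = (1+s)(1+s^2)^{-(\alpha+\beta)},
\]
where $g$ is continuous on $[0,1]$ with $g(1) = 2^{1-\alpha-\beta}$. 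The decisive feature is that the exponent of $(1-s)$ is now exactly $2\beta-1$, which is what produces the critical value $\beta = 0.5$.

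The asymptotics as $n\to\infty$ are then driven by the concentration of $s^{n+\alpha-1}$ near $s=1$. I would split the integral at $s = 1-\delta$: on $[0,1-\delta]$ the factor $s^{n}$ decays geometrically while everything else stays bounded, so this part is negligible compared to $n^{-2\beta}$; on $[1-\delta,1]$ I replace $g$ by $g(1)$ up to an $\varepsilon$-error using continuity, reducing the leading term to $g(1)\,B(n+\alpha,2\beta)$. Stirling's formula gives $n^{2\beta}B(n+\alpha,2\beta)\to\Gamma(2\beta)$, and collecting constants yields
\[
n\,P(D^\rightarrow \geq n) \;\sim\; \frac{2^{1-\beta}\,\Gamma(2\beta)}{B(\alpha,\beta)}\; n^{\,1-2\beta}.
\]
When $\beta < 0.5$ the exponent $1-2\beta$ is positive, so the left side diverges to $+\infty$ and exceeds $1/E(\eta)$ (with the convention $1/E(\eta)=0$ when $E(\eta)=\infty$); when $\beta = 0.5$ the limit equals $\sqrt{2}/B(\alpha,0.5)$, and the condition $\sqrt{2}/B(\alpha,0.5) > 1/E(\eta)$ is precisely $B(\alpha,0.5) < \sqrt{2}\,E(\eta)$. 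Since $B(\cdot,0.5)$ is strictly decreasing, this holds exactly for $\alpha > \alpha_0$, matching the definition in the statement. In both cases Proposition~\ref{prop:bz13}(ii) then delivers positive survival probability.

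I expect the main obstacle to be making the third step fully rigorous: controlling the error incurred by freezing $g$ at its boundary value and confirming that the away-from-one contribution is genuinely of smaller order than $n^{-2\beta}$, sharply enough to extract the correct constant rather than merely the right power of $n$. This is a Watson/Laplace-type estimate made delicate by the fact that for $\beta < 0.5$ the weight $(1-s)^{2\beta-1}$ is itself singular precisely at the concentration point $s=1$, so the splitting argument and the Gamma-ratio asymptotics must be arranged to coexist with that singularity.
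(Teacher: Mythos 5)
Your proposal is correct, and it reaches the result by a genuinely different route than the paper. Both arguments funnel through Proposition \ref{prop:bz13} (ii) and the explicit conditional tail $P(D^\rightarrow\geq n\mid\pi=p)=\big(\tfrac{1-\sqrt{1-p^2}}{p}\big)^n$ (which the paper imports as Lemma \ref{lem:dist_d} and you rederive correctly from the one-step quadratic), so everything reduces to showing $\liminf_{n} nP(D^\rightarrow\geq n)>1/E(\eta)$. The paper does this with an elementary, bare-hands lower bound: it restricts the mixture integral to windows $[1-c_i/n^2,\,1-c_{i+1}/n^2]$, bounds the integrand below on each window using $\lim_n(1-\sqrt{2c_i}/n)^n=e^{-\sqrt{2c_i}}$, and recognizes the resulting sum as a Riemann sum whose fine-mesh limit exceeds the threshold, giving $+\infty$ for $\beta<0.5$ and (at $\beta=0.5$) any value below $\sqrt{2}/B(\alpha,0.5)$. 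You instead linearize exactly via $s=r(p)$, $p=2s/(1+s^2)$, turning the tail into $\frac{2^\alpha}{B(\alpha,\beta)}\int_0^1 s^{n+\alpha-1}(1-s)^{2\beta-1}g(s)\,ds$ with $g$ continuous and positive on $[0,1]$, and then extract the sharp asymptotics $nP(D^\rightarrow\geq n)\sim \frac{2^{1-\beta}\Gamma(2\beta)}{B(\alpha,\beta)}\,n^{1-2\beta}$, whose value at $\beta=0.5$ is exactly the paper's constant $\sqrt{2}/B(\alpha,0.5)$. The technical worry you flag is not a real obstacle: writing $u=1-s$, the weight $u^{2\beta-1}$ is integrable since $2\beta-1>-1$, the contribution of $\{u\geq\delta\}$ is at most $\sup g\cdot\delta^{2\beta-1}(1-\delta)^{n+\alpha-1}$, hence exponentially negligible against $n^{-2\beta}$, and on $\{u<\delta\}$ freezing $g$ costs only a factor $1\pm\varepsilon(\delta)$ relative to $B(n+\alpha,2\beta)$ itself --- the singularity is absorbed into the Beta function rather than fought against --- after which the Gamma-ratio limit $n^{2\beta}B(n+\alpha,2\beta)\to\Gamma(2\beta)$ and a standard $\varepsilon$--$\delta$ diagonal argument finish the job. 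As for what each approach buys: the paper's partition argument needs nothing beyond first-year limits and proves only the one-sided bound it needs, while your substitution yields the exact tail asymptotics, showing in addition that the constant $\sqrt{2}$ in the definition of $\alpha_0$, and hence the threshold $\alpha_0$ itself, is the best that the coupling criterion of Proposition \ref{prop:bz13} (ii) can possibly deliver.
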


Figure \ref{fig:diagram} summarizes the results of Theorems \ref{teo:>1} and \ref{teo:<0.5} in a phase diagram for cases where $\eta$ satisfies $P(\eta=0)<1$ and $E(\eta)<\infty$. Note that in Theorem \ref{teo:<0.5}, $\alpha_0=0$ when $E(\eta)=\infty$, which makes the diagram complete in this case.

\begin{figure}[h]
\centering
\includegraphics[width=15cm]{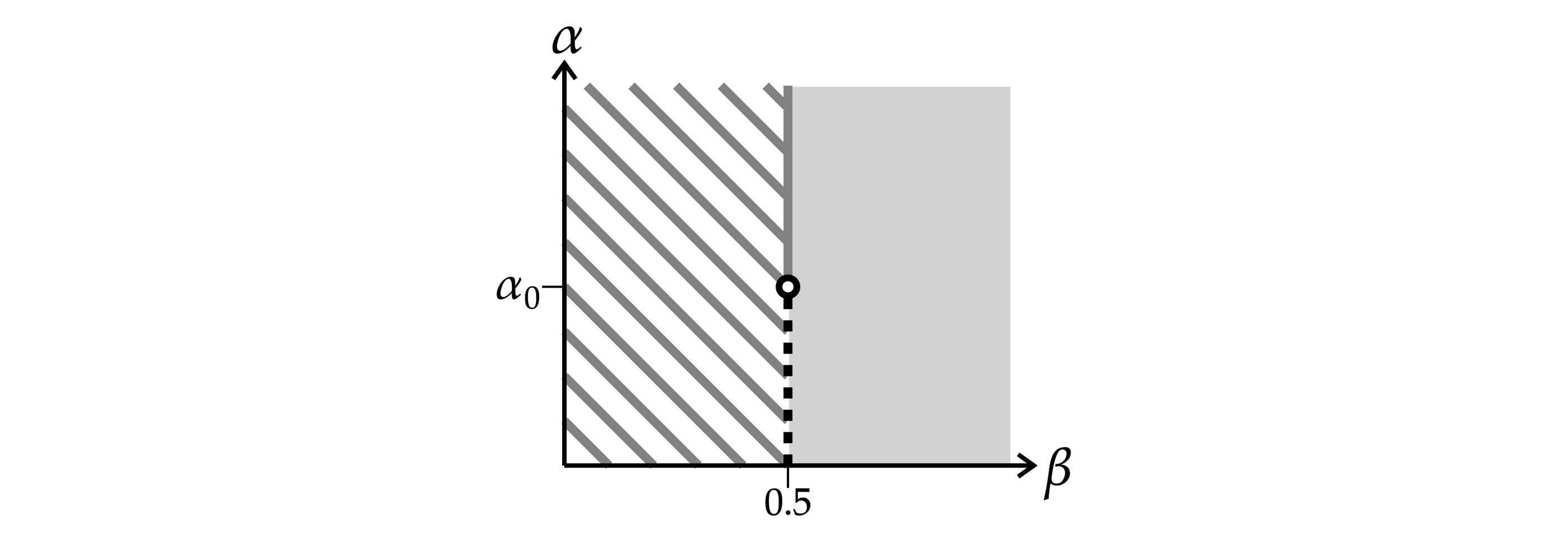}
\caption{Phase diagram for $\text{FM}(\mathbb{Z},\mathcal{G}_\pi,\eta)$ where $\pi\sim Beta(\alpha,\beta)$ and $\eta$ is such that $P(\eta=0)<1$ and $E(\eta)<\infty$. The shaded part represents the region where $P(\text{FM}(\mathbb{Z},\mathcal{G}_\pi,\eta) \text{ survives})=0$ while the hatched part represents the region where $P(\text{FM}(\mathbb{Z},\mathcal{G}_\pi,\eta) \text{ survives})>0$. The black circle and the dashed line represent the unknown parts of the diagram.}\label{fig:diagram}
\end{figure}

We now give special attention to distributions for which $P(\text{FM}(\mathbb{Z},L,\eta)\text{ survives})>0$, for a general $L$. In such cases, we can also analyze the recurrence of the model.

\begin{defn}
    Let $V_0(\text{FM}(\mathbb{Z},L,\eta$)) be the number of times vertex 0 is visited in $\text{FM}(\mathbb{Z},L,\eta)$. We say that $\text{FM}(\mathbb{Z},L,\eta$) is recurrent if
    \[P(V_0(\text{FM}(\mathbb{Z},L,\eta))=\infty)>0.\]
\end{defn}

It is straightforward that 
\[
P(V_0(\text{FM}(\mathbb{Z},L,\eta))=\infty) \leq P(\text{FM}(\mathbb{Z},L ,\eta)\text{ survives})
\]
and that the model is non-recurrent whenever 
 $
P(\text{FM}(\mathbb{Z},L,\eta)\text{ survives}) = 0$.

A natural question is whether $\text{FM}(\mathbb{Z},L,\eta)$ is recurrent whenever $P(\text{FM}(\mathbb{Z},L,\eta)\text{ survives}) > 0$. The next theorem provides a general answer to this question, highlighting the strong relationship between survival and recurrence.

\begin{teo}\label{teo:recor}
If $E(\eta)<\infty$ and $P(\text{FM}(\mathbb{Z},L,\eta) \text{ survives})>0$, then
    \[P(V_0(\text{FM}(\mathbb{Z},L,\eta))=\infty|\text{FM}(\mathbb{Z},L,\eta) \text{ survives})=1.\]
\end{teo}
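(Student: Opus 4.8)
The plan is to prove the equivalent statement $P(\text{FM}(\mathbb{Z},L,\eta)\text{ survives},\ V_0<\infty)=0$, where I abbreviate $V_0:=V_0(\text{FM}(\mathbb{Z},L,\eta))$. I would open with a structural reduction. Since the walks are nearest-neighbour, the set of sites visited by particles born at a fixed vertex $x$ is the random interval $[x-D^\leftarrow_x,\,x+D^\rightarrow_x]$ with $D^\leftarrow_x:=\max_{i\le\eta_x}D^\leftarrow_{x,i}$ and $D^\rightarrow_x:=\max_{i\le\eta_x}D^\rightarrow_{x,i}$. Chaining these intervals along an activation path $0=x_0,x_1,\dots$ (each hop traverses all intermediate vertices) shows that the full set of activated vertices is an interval containing $0$; on the survival event it is infinite, so writing $R^*$ and $L^*$ for its right and left endpoints, the survival event is contained in $\{R^*=\infty\}\cup\{L^*=\infty\}$. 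By the left–right symmetry of the simple random walk it suffices to prove $V_0=\infty$ almost surely on $\{R^*=\infty\}$. The payoff of this reduction is that on $\{R^*=\infty\}$ \emph{every} vertex $y\ge 0$ is activated, which eliminates the temporal and propagation subtleties and turns recurrence into a purely spatial question: how many already-activated vertices emit a particle that reaches $0$.

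Next I would manufacture infinitely many such returns by a site-indexed Borel–Cantelli argument. For $y\ge 1$ set $E_y:=\{\eta_y\ge 1,\ D^\leftarrow_{y,1}\ge y\}$; on $E_y$ the first particle born at $y$ attains minimal position $y-D^\leftarrow_{y,1}\le 0$, hence, being nearest-neighbour and starting at $y>0$, it visits $0$ within its lifetime. The events $(E_y)_{y\ge 1}$ depend on disjoint families of particles and are therefore independent, and using the independence of $\eta$ from the walks together with $D^\leftarrow_{y,1}\overset{d}{=}D^\rightarrow$ one gets $\sum_{y\ge 1}P(E_y)=P(\eta\ge 1)\sum_{y\ge 1}P(D^\rightarrow\ge y)=P(\eta\ge 1)\,E(D^\rightarrow)$, with $P(\eta\ge 1)>0$ since survival has positive probability. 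If this sum diverges, the second Borel–Cantelli lemma gives that infinitely many $E_y$ occur almost surely \emph{unconditionally}, hence also on $\{R^*=\infty\}$; on that event each such $y$ is activated, so its first particle is active and genuinely visits $0$, and distinct $y$ yield distinct particles, whence $V_0=\infty$. A pleasant feature is that this step never conditions on survival — the divergence of the sum is an unconditional input.

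It remains to show $E(D^\rightarrow)=\infty$, which is where survival enters. Because $E(\eta)<\infty$, the contrapositive of Proposition~\ref{prop:bz13}(i) yields $\limsup_{n\to\infty}nP(D^*\ge n)\ge\frac{1}{2E(\eta)}>0$. The step I expect to require the most care is upgrading this $\limsup$ tail bound into a divergent mean: for the non-increasing sequence $a_n:=P(D^*\ge n)$ one invokes the elementary fact that $\sum_n a_n<\infty$ forces $na_n\to 0$, so $\limsup_n na_n>0$ already forces $E(D^*)=\sum_{n\ge 1}P(D^*\ge n)=\infty$. Finally $D^*=D^\rightarrow\vee D^\leftarrow\le D^\rightarrow+D^\leftarrow$ combined with $D^\rightarrow\overset{d}{=}D^\leftarrow$ gives $E(D^\rightarrow)\ge\tfrac12 E(D^*)=\infty$, closing the loop. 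The two delicate points are the interval/activation structure underlying the reduction (so that $\{R^*=\infty\}$ really activates all of $[0,\infty)$) and this passage from the $\limsup$ condition to $E(D^*)=\infty$; once both are secured, the Borel–Cantelli conclusion is routine, and the symmetric treatment of $\{L^*=\infty\}$ completes the proof.
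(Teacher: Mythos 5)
Your proof is correct and follows essentially the same route as the paper's: the contrapositive of Proposition \ref{prop:bz13}(i) gives $\limsup_{n\to\infty} nP(D^*\geq n)\geq \frac{1}{2E(\eta)}$, this is upgraded to $\sum_{n} P(D^*\geq n)=\infty$, the second Borel--Cantelli lemma (using independence across sites) produces almost surely infinitely many vertices $y$ with $\eta_y\geq 1$ and $D^\leftarrow_{y,1}\geq y$, and one concludes separately on the right-sided and left-sided survival events. The only cosmetic difference is in the middle step: you invoke the standard fact that a convergent series with non-increasing non-negative terms satisfies $na_n\to 0$, whereas the paper proves the same divergence by an explicit subsequence/blocking construction.
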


\begin{rem}
The proof of Theorem \ref{teo:recor} relies on couplings and previous results on closely related rumor processes.  
Indeed, as another application, the same arguments used in our proof can also be applied to the rumor process defined in \cite{rumour} to show that the rumor reaches vertex zero infinitely often, almost surely, given its survival.
\end{rem}

Since visiting vertex $0$ infinitely often is only possible when the model survives, Theorem \ref{teo:recor} also implies that if $E(\eta)<\infty$, then
\[P(V_0(\text{FM}(\mathbb{Z},L,\eta))=\infty)=P(\text{FM}(\mathbb{Z},L,\eta)\text{ survives}).\] 

The key idea of the proof is to identify necessary conditions on the distribution of $L$ so that 
$
P(\text{FM}(\mathbb{Z},L,\eta)\text{ survives})>0.
$ 
These conditions, in turn, imply that, with probability 1, there will be potential for infinitely many particles to return to the root. When survival occurs, this potential is equivalent to  
$
V_0(\text{FM}(\mathbb{Z},L,\eta))=\infty$.

We can also apply Theorem \ref{teo:recor} to the specific case $L=\mathcal{G}_\pi$ with $\pi\sim Beta(\alpha,\beta)$ discussed earlier. By doing this, we conclude that Figure \ref{fig:diagram} is also a phase diagram regarding recurrence.

The remainder of this paper is dedicated to proving Theorems \ref{teo:>1}, \ref{teo:<0.5}, and \ref{teo:recor}. Section \ref{sec:firework} introduces models that serve as useful couplings for the frog model, while Section \ref{sec:demo_z} contains the main proofs of the theorems.

\section{Proofs}

\subsection{Coupling with rumor processes}\label{sec:firework} 

We begin by defining models that will later be used in a coupling argument to assist in proving our theorems.

We consider the rumor model described in \cite{rumour}. Let $\{I_z\}_{z\in\mathbb{Z}}$ be a collection of i.i.d. non-negative random variables representing the radius within which each vertex propagates the rumor. We let $\mathcal{A}_0=\tilde{\mathcal{A}_0}:=\{0\}$ and define recursively for $n\in\mathbb{N}_0$:
\begin{equation}\label{eq:def_rumor}
	\tilde{\mathcal{A}}_{n+1}:=\bigcup_{u \in \tilde{\mathcal{A}}_n}\{z \in \mathbb{Z}\setminus \mathcal{A}_n : |z-u| \leq I_u\} \hspace{5mm}\text{ and } \hspace{5mm}\mathcal{A}_{n+1}:=\mathcal{A}_n\cup \tilde{\mathcal{A}}_{n+1}.
\end{equation}

Here, $\mathcal{A}_n$ represents the set of vertices that are aware of the rumor at time $n$, while $\tilde{\mathcal{A}}_n$ consists of vertices that first receive the rumor at time $n$ and propagate it at time $n+1$. Thus, each vertex $z\in \mathbb{Z}$, upon receiving the rumor at time $n$, attempts to propagate it at time $n+1$ to its neighbors that have not yet received it and are within a distance of at most $I_z$.  
We say that the rumor percolates if $|\cup_{n\geq 0}\mathcal{A}_n|= \infty$, meaning that the rumor reaches infinitely many vertices. The percolation event is also equivalent to $\cap_{n\geq 0}\{\tilde{\mathcal{A}}_n\neq \emptyset\}$, which ensures that at every time step at least one vertex continues propagating the rumor.

We refer to this model as the \textit{bi-directional firework process}. The usual (uni-directional) firework process \cite{firework} can be defined similarly by modifying, for $n\in\mathbb{N}_0$, the definition of $\tilde{\mathcal{A}}_{n+1}$ in (\ref{eq:def_rumor}) to  
\[
\tilde{\mathcal{A}}_{n+1}:=\bigcup_{u \in \tilde{\mathcal{A}}_n}\{z \in \mathbb{N}\setminus \mathcal{A}_n : u< z \leq u+I_u\}
\]
while keeping all other definitions unchanged. Thus, the uni-directional firework process behaves similarly to the bi-directional one, but spreading occurs in only one direction, and negative vertices play no role. We denote the firework and bi-directional firework processes by $\text{FW}(I)$ and $\text{BFW}(I)$, respectively, where $I$ is the radius random variable. We assume that $P(I=0)\in(0,1)$ to avoid trivialities.

The following lemma establishes results regarding percolation in both processes.

\begin{lem}\label{lem:fire_bifire} The following items hold:
\begin{itemize}
    \item[(i)] $P(\text{FW$(I)$ percolates})>0$ if and only if $ \sum_{n=1}^\infty \prod_{i=0}^n P(I\leq i)<\infty$.
    \item[(ii)] If $\sum_{n=1}^\infty \prod_{i=0}^n P(I\leq i)<\infty$, then $P(\text{BFW$(I)$ percolates})>0$.
    \item[(iii)] If $\sum_{n=1}^\infty \prod_{i=0}^n P(I\leq i)^2=\infty$, then $P(\text{BFW$(I)$ percolates})=0$.
\end{itemize}
\end{lem}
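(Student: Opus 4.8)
The plan is to treat the three items in turn, reducing each to a tractable one‑dimensional object.

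For (i), I would first observe that in $\mathrm{FW}(I)$ the set of vertices ever reached is an interval $[0,R]$ with $R\in\mathbb{N}_0\cup\{\infty\}$: a vertex $z$ is reached only through some $u<z$ with $u+I_u\ge z$, and such a $u$ also reaches every vertex of $(u,z)$. Hence percolation is exactly $\bigcap_{n\ge0}C_n$, where $C_n=\{\exists\,u\in\{0,\dots,n\}:u+I_u\ge n+1\}$ is the event that the gap $(n,n+1)$ can be crossed. Writing $F(i)=P(I\le i)$ and $W_n:=\max_{0\le u\le n}(u+I_u)-n$, a direct computation gives the Lindley‑type recursion $W_{n+1}=\max(W_n-1,\,I_{n+1})$ with $W_0=I_0$, together with $C_n^c=\{W_n=0\}$; thus percolation equals $\{W_n\ge1\ \forall n\}$, the event that the Markov chain $(W_n)$ never hits $0$. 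The state $0$ is a regeneration point (from $0$ the chain jumps to a fresh copy of $I$), so the number $N_0$ of visits to $0$ is geometric and $E[N_0]=r/(1-r)$, with $r$ the return probability; consequently percolation has positive probability iff $r<1$ iff $E[N_0]<\infty$. Since $E[N_0]=\sum_{n\ge0}P(W_n=0)=\sum_{n\ge0}\prod_{i=0}^nF(i)$, which differs from $\sum_{n\ge1}\prod_{i=0}^nF(i)$ only by the finite term $F(0)$, item (i) follows.

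For (ii), I would couple $\mathrm{FW}(I)$ and $\mathrm{BFW}(I)$ on the same family $\{I_z\}$. Every rightward firework edge $u\to z$ (with $0\le u<z\le u+I_u$) is also a bi‑directional edge, since then $|z-u|=z-u\le I_u$. Hence any directed path from $0$ in $\mathrm{FW}(I)$ is a directed path in $\mathrm{BFW}(I)$, so the firework's reached set is contained in the bi‑directional firework's reached set. Under the hypothesis $\sum_n\prod_{i=0}^nF(i)<\infty$, item (i) gives $P(\mathrm{FW}(I)\text{ percolates})>0$, and the inclusion of the percolation events yields $P(\mathrm{BFW}(I)\text{ percolates})\ge P(\mathrm{FW}(I)\text{ percolates})>0$.

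For (iii), by symmetry and a union bound it suffices to show that, under $\sum_n\prod_{i=0}^nF(i)^2=\infty$, the probability of reaching $+\infty$ is $0$. The idea is to build, in disjoint windows marching to $+\infty$, independent \emph{two‑sided local cuts}: for a center $c$ and half‑width $L$, let $B=B(c,L)$ be the event that $I_u\le c-u$ for all $u\in[c-L,c]$ and $I_v\le v-c-1$ for all $v\in[c+1,c+L]$, so that no vertex within distance $L$ of the gap $(c,c+1)$ jumps across it from either side. Because the two halves involve disjoint vertices, $P(B)=\prod_{j=0}^{L}F(j)\prod_{j=0}^{L-1}F(j)\asymp\big(\prod_{j=0}^{L}F(j)\big)^2$ (using $F(0)>0$); this squared form is precisely what makes $\sum_m P(B_m)=\infty$ achievable with disjoint windows of increasing half‑width $L_m\to\infty$, exactly under the hypothesis $\sum_n\prod_{i=0}^nF(i)^2=\infty$. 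The windows being disjoint, the $B_m$ are independent, so by the second Borel--Cantelli lemma infinitely many of them occur almost surely. Since reaching $+\infty$ forces every gap $(c_m,c_m+1)$ to be crossed, and on $B_m$ such a crossing can only be made by a \emph{reached} vertex lying outside the window, hence by a jump of length exceeding $L_m$, percolation would require infinitely many arbitrarily long crossing jumps emanating from reached vertices.

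The main obstacle is precisely this last step: ruling out that the rumor circumvents the local cuts through long‑range jumps from distant reached vertices. A far vertex can in principle leap over an entire window, so the two‑sided cuts are not literal graph cuts; the argument must exploit that such a bypassing vertex is itself reached (and hence has already crossed the earlier cuts) together with the growth $L_m\to\infty$ to render the bypass probabilities summable. Controlling this interplay between the reached set and long jumps---rather than the comparatively routine regeneration and coupling arguments behind (i) and (ii)---is where the real work lies, and it is the reason the stronger, squared summability condition is imposed in (iii).
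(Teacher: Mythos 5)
Your treatments of items (i) and (ii) are sound. For (i), the Lindley-type recursion $W_{n+1}=\max(W_n-1,\,I_{n+1})$ with regeneration at state $0$ is a correct, self-contained argument (the paper does not reprove (i); it simply cites \cite{firework}), and your coupling for (ii) is exactly the paper's. The problem is item (iii), where your text stops short of a proof. As you yourself concede, the events $B_m$ are not cuts: a reached vertex lying to the left of the window $[c_m-L_m,c_m+L_m]$ may have radius larger than $L_m$ and carry the rumor across the gap, so the almost sure occurrence of infinitely many $B_m$ does not by itself prevent percolation. Nothing in the hypothesis $\sum_n\prod_{i=0}^n P(I\le i)^2=\infty$ controls these bypass probabilities: that condition says nothing about the tail of $I$ beyond the window scale, nor about where the reached set sits when a given window is tested, and you propose no mechanism for making the bypasses summable. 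So the decisive step of (iii) is missing, not merely deferred. It is also worth noting that a sufficiently strong local-cut argument of this kind would tend to prove the stronger claim that BFW$(I)$ fails to percolate whenever $\sum_n\prod_{i=0}^n P(I\le i)=\infty$ — precisely the statement (Proposition 3.1 of the rumor paper) that the authors say they were unable to verify; this is a hint that the purely local route is genuinely delicate.

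The paper closes (iii) by a global symmetrization that avoids cuts altogether, and it is the idea your outline is missing. Define a mirrored process BFW$^*(I^*)$ whose radii are symmetric, $I^*_{-x}=I^*_x:=\max\{I_{-x},I_x\}$ for $x\in\mathbb{N}_0$, so that $P(I^*\le i)=P(I\le i)^2$. By symmetry the reached set of BFW$^*$ is a symmetric interval $\{-x_n,\dots,x_n\}$ at every step, and any transmission from a positive vertex to a negative one is redundant because its mirror image makes the same transmission; hence BFW$^*(I^*)$ percolates with exactly the probability that FW$(I^*)$ does. Since $I_x\le I^*_x$ pointwise under this coupling, percolation of BFW$(I)$ implies percolation of BFW$^*(I^*)$, and the latter has probability $0$ by item (i), because $\sum_n\prod_{i=0}^n P(I^*\le i)=\sum_n\prod_{i=0}^n P(I\le i)^2=\infty$. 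This also explains where the squared condition really comes from: it is the distribution function of the maximum of two independent copies of $I$, not the probability of a two-sided cut. If you want to complete your write-up, replace your window construction by this folding of $\mathbb{Z}$ onto $\mathbb{N}_0$ and then invoke your own proof of (i) applied to $I^*$.
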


\begin{proof}

Item (i) corresponds to Theorem 2.1 in \cite{firework}. 
The proof of item (ii) follows directly from item (i) and the fact that 
\[
P(\text{FW}(I) \text{ percolates}) \leq P(\text{BFW}(I) \text{ percolates}),
\]
which arises from a simple coupling argument between these processes.

We now focus on proving item (iii).
Let BFW$^*$($I$) denote the \textit{mirrored bi-directional firework process} with radius variable $I$. 
In this variant, we consider a collection $\{I_x\}_{x\in \mathbb{N}_0}$ of i.i.d. non-negative random variables. 
These ranges are defined only for vertices in $\mathbb{N}_0$ and are extended to the negative side by setting $I_{-x} := I_x$ for every $x \in \mathbb{N}$, thus introducing symmetry into the process.

Let $\mathcal{A}^{BFW^*}_0 = \tilde{\mathcal{A}}_0^{BFW^*} := \{0\}$ and define recursively, for $n \in \mathbb{N}_0$,
\[
\tilde{\mathcal{A}}^{BFW^*}_{n+1} := \bigcup_{u \in \tilde{\mathcal{A}}_n}
\{ z \in \mathbb{Z} \setminus \mathcal{A}^{BFW^*}_n : |z - u| \leq I_u \},
\quad \text{and} \quad
\mathcal{A}^{BFW^*}_{n+1} := \mathcal{A}^{BFW^*}_n \cup \tilde{\mathcal{A}}^{BFW^*}_{n+1}.
\]

By symmetry, for every $n \in \mathbb{N}_0$, we have $\mathcal{A}_n^{BFW^*} = \{-x_n, \ldots, x_n\}$ for some $x_n \in \mathbb{N}_0$. 
Due to the mirror effect of BFW$^*$($I$), we may equivalently assume that positive vertices spread the rumor only to larger positive vertices (as in FW($I$)). 
Indeed, we can ignore any positive vertex $x \in \tilde{\mathcal{A}}_n^{BFW^*}$ attempting to transmit the rumor to a negative vertex $y$, since $-x \in \tilde{\mathcal{A}}_n^{BFW^*}$ will also transmit the rumor to $y$, as $I_{-x} = I_x$. 
Hence, BFW$^*$($I$) can be viewed as a composition of two identical copies of FW($I$), one on the positive side and the other on the negative side. Therefore,
\[
P(\text{BFW$^*$($I$) percolates}) = P(\text{FW($I$) percolates}).
\]

Now consider any random variable $I$ such that
\[\sum_{n=1}^\infty \prod_{i=0}^n P(I\leq i)^2=\infty.\]

Consider the random variable $I^*$ such that $P(I^*\leq i)=P(I\leq i)^2$. We construct a coupling between BFW($I$) and BFW$^*$($I^*$) such that, for every $x \in \mathbb{N}_0$,
\begin{equation}\label{eq:i_i+}
I^*_{-x} = I^*_x = \max\{I_{-x}, I_x\},
\end{equation}
which is consistent with the definition above, since it preserves both 
$P(I^* \leq i) = P(I \leq i)^2$ and the symmetry of BFW$^*$($I^*$).

Define for $z\in\mathbb{Z}$,
\[\mathcal{B}_z:=\{x \in \mathbb{Z} : |z-x| \leq I_x\}\]
and
\[\mathcal{B}_z^*:=\{x \in \mathbb{Z} : |z-x| \leq I^*_x\}.\]
Note that (\ref{eq:i_i+}) implies that $\mathcal{B}_z\subset \mathcal{B}^*_z$ for every $z\in\mathbb{Z}$.

If BFW($I$) percolates, there exists an infinite sequence of distinct vertices $0=z_0,z_1,z_2,...$ such that $z_{n+1}\in\mathcal{B}_{z_{n}}$ for every $n\in\mathbb{N}_0$. Since $\mathcal{B}_{z_n}\subset \mathcal{B}^*_{z_n}$ for every $n\in\mathbb{N}_0$, it follows immediately that percolation in BFW($I$) implies percolation in BFW$^*$($I^*$).

Therefore, 
\begin{equation}\label{eq:bfw_fw}
P(\text{BFW($I$) percolates})\leq P(\text{BFW$^*$($I^*$) percolates})=P(\text{FW($I^*$) percolates}).\end{equation}

So, the proof is complete by noticing that $P(\text{FW($I^*$) percolates})=0$ by item (i), since
\[\sum_{n=1}^\infty \prod_{i=0}^n P(I^*\leq i)=\sum_{n=1}^\infty \prod_{i=0}^n P(I\leq i)^2=\infty.\]
\end{proof}

\begin{rem}
    Proposition 3.1 in \cite{rumour} states that $P(\text{BFW$(I)$ percolates})>0$  if and only if $\sum_{n=1}^\infty \prod_{i=0}^n P(I\leq i)<\infty$. The available proof, however, appears incomplete. In our opinion the statement cannot be confirmed on the basis of the current exposition. Unfortunately, we have not been able to supply the missing arguments. Thus, we use a weaker result, as stated in Lemma \ref{lem:fire_bifire} (iii). If this stronger statement is eventually proven, the constant in Proposition \ref{prop:bz13} (i) would improve from $1/2E(\eta)$ to $1/E(\eta)$, but item (ii), as well as Theorems \ref{teo:>1}, \ref{teo:<0.5} and \ref{teo:recor}, would remain unchanged.
\end{rem}

In particular, we consider the case where $I$ is defined as a function of two other random variables. Let $\{N_z\}_{z\in \mathbb{Z}}$ and $\{R_{z,i}\}_{z\in\mathbb{Z},i\in\mathbb{N}}$ be independent collections of i.i.d. random variables, where $N_z\in \mathbb{N}_0$ denotes the number of spreaders at vertex $z$ and $R_{z,i}\geq 0$ denotes the radius of the $i$-th spreader at vertex $z$. We define $I_z$ as the total propagation radius of the rumor at vertex $z$, considering all its spreaders:
\begin{equation}\label{eq:i_n_r}
	I_z:=\max_{i\in\{1,...,N_z\}} R_{z,i}.
\end{equation}

In this setup, $P(I\leq i)$ can be directly determined from the distributions of $N$ and $R$. 

\begin{lem}\label{lem:liminf_limsup} Consider (\ref{eq:i_n_r}). The following items hold:
\begin{itemize}
    \item[(i)] If $E(N)<\infty$ and $\limsup_{n\to \infty} nP(R \geq n)<\frac{1}{2E(N)}$, then
\[P(\text{BFW$(I)$ percolates})=0.\]
    \item[(ii)] If $E(N)<\infty$ and $\limsup_{n\to \infty} nP(R \geq n)<1/E(N)$, then
\[P(\text{FW$(I)$ percolates})=0.\]
  \item[(iii)] If $\liminf_{n\to\infty} nP(R\geq n)>1/E(N)$ (where $1/E(N):=0$ when $E(N)=\infty$), then
  \[P(\text{FW$(I)$ percolates})>0 \hspace{3mm}\text{and} \hspace{3mm}P(\text{BFW$(I)$ percolates})>0.\]
\end{itemize}
\end{lem}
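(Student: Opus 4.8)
The goal is to prove Lemma \ref{lem:liminf_limsup}, which connects the tail behavior of $R$ and the mean of $N$ to percolation of the firework processes, via the series criteria established in Lemma \ref{lem:fire_bifire}.

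\medskip

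\textbf{Approach.}
The plan is to reduce everything to the convergence or divergence of the relevant product-series in Lemma \ref{lem:fire_bifire} by controlling $P(I \leq i)$ through the tail $P(R \geq n)$ and the distribution of $N$. From \eqref{eq:i_n_r}, conditioning on $N$ and using independence of the $R_{z,i}$,
\[
P(I \leq i) = E\!\left[P(R \leq i)^{N}\right].
\]
The key analytic step is to estimate $1 - P(I \leq i)$ for large $i$. Writing $q_i := P(R > i) = P(R \geq i+1)$, one has $P(R \leq i)^N = (1-q_i)^N$, and since $q_i \to 0$, a first-order expansion gives $1 - P(I\leq i) = E[1-(1-q_i)^N] \sim q_i \, E(N)$ as $i \to \infty$ whenever $E(N) < \infty$ (dominated convergence applied to $(1-(1-q_i)^N)/q_i \leq N$). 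Thus the asymptotics of $1 - P(I \leq i)$ are governed by $E(N)\, P(R \geq i)$ up to the $n P(R\geq n)$ scaling in the hypotheses.

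\medskip

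\textbf{Key steps in order.}
First I would establish the asymptotic equivalence $1 - P(I \leq i) \sim E(N) P(R > i)$ under $E(N) < \infty$, and separately record the inequality $1 - P(I\leq i) \geq P(N \geq 1)\,P(R > i)$ valid without moment assumptions, together with $1-P(I\le i)\le E(N)\,P(R>i)$ (Bernoulli/union bound), which is what lets item (iii) proceed even when $E(N)=\infty$. Second, for items (i) and (ii) I would translate the hypothesis $\limsup_n nP(R\geq n) < c$ into $1 - P(I \leq i) \leq (c' /i)$ for some $c' < 1$ and all large $i$ (with $c' = E(N)\cdot\limsup$), so that $\prod_{i=0}^n P(I\leq i) \geq \exp(-\sum_{i} \tfrac{c'}{i}(1+o(1))) \gtrsim n^{-c'}$; choosing the threshold $c$ as $1/2E(N)$ (resp.\ $1/E(N)$) forces the exponent, making $\sum_n \prod_{i=0}^n P(I\leq i)^2$ (resp.\ $\sum_n \prod_{i=0}^n P(I\leq i)$) diverge, whence Lemma \ref{lem:fire_bifire}(iii) (resp.\ (i)) yields zero percolation probability. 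Third, for item (iii) I would use the lower tail bound to get $1 - P(I\leq i) \geq (c''/i)$ with $c'' > 1$ along a subsequence coming from the $\liminf$ hypothesis, giving $\prod_{i=0}^n P(I\leq i) \lesssim n^{-c''}$ and hence $\sum_n \prod_{i=0}^n P(I\leq i) < \infty$; Lemma \ref{lem:fire_bifire}(i) then gives positive percolation for FW$(I)$, and the coupling inequality $P(\text{FW}(I)\text{ percolates}) \leq P(\text{BFW}(I)\text{ percolates})$ noted in the proof of Lemma \ref{lem:fire_bifire}(ii) transfers positivity to BFW$(I)$.

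\medskip

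\textbf{Main obstacle.}
The delicate point is handling the $\liminf$/$\limsup$ hypotheses rigorously: unlike a clean limit, these only control $nP(R\geq n)$ along subsequences or eventually, so I must be careful to extract, for item (iii), enough indices $i$ where $i(1-P(I\le i))$ exceeds $1$ to force the partial sums $\sum_{i\le n}\bigl(1-P(I\le i)\bigr)$ to grow like $c''\log n$ with $c''>1$, and symmetrically for items (i)–(ii) to bound the same sums above. Converting $\liminf_n nP(R\ge n)>1/E(N)$ into a divergence-rate statement about $\sum_i (1-P(I\le i))$ requires comparing the possibly irregular sequence $P(R\ge i)$ against $1/i$, and then passing through the $\log$-of-product estimate $\log\prod_{i=0}^n P(I\le i)=\sum_{i=0}^n\log(1-(1-P(I\le i)))$ with the standard bounds $-x-x^2\le \log(1-x)\le -x$ for small $x$. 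Keeping these asymptotic comparisons uniform, and ensuring the $E(N)=\infty$ case in item (iii) is covered by the moment-free lower bound $1-P(I\le i)\ge P(N\ge1)P(R>i)$ combined with $\liminf_n nP(R\ge n)=\infty$, is the part demanding the most care.
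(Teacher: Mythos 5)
Your route is genuinely different from the paper's. The paper does not verify the series criteria analytically: it quotes the uni-directional statements from an external result (Proposition 3.2 of \cite{firework_stations}, which combines Lemma \ref{lem:fire_bifire}(i) with Kummer's test), and then transfers them to the bi-directional process through the coupling inclusions $\{\text{FW}(I)\text{ percolates}\}\subset\{\text{BFW}(I)\text{ percolates}\}\subset\{\text{FW}(I^*)\text{ percolates}\}$, where $I^*_z=\max\{I_{-z},I_z\}$ corresponds to $N^*_z=N_z+N_{-z}$; item (i) is then just item (ii) applied to $I^*$ with $E(N^*)=2E(N)$, which is exactly where the constant $1/(2E(N))$ comes from. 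You instead prove everything directly from $P(I\le i)=E\left[P(R\le i)^N\right]$, the asymptotics $1-P(I\le i)\sim E(N)P(R>i)$, and logarithmic product estimates, obtaining item (i) from the squared-product criterion of Lemma \ref{lem:fire_bifire}(iii) rather than from the doubling coupling. For items (i), (ii), and the $E(N)<\infty$ part of item (iii), your argument is correct and self-contained (and arguably more transparent than citing Kummer's test).

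However, there is a genuine gap in item (iii) when $E(N)=\infty$. In that case the hypothesis reads $\liminf_n nP(R\ge n)>0$ (since $1/E(N):=0$), not $\liminf_n nP(R\ge n)=\infty$ as your outline assumes. Writing $\delta:=\liminf_n nP(R\ge n)\in(0,\infty]$, your moment-free bound $1-P(I\le i)\ge P(N\ge 1)P(R>i)$ only yields $\liminf_i i\bigl(1-P(I\le i)\bigr)\ge P(N\ge 1)\,\delta$, and this constant can easily be smaller than $1$ (any $\delta<1$ suffices). Then your estimate gives only $\prod_{i=0}^n P(I\le i)\le C\, n^{-P(N\ge 1)\delta}$, which does not make $\sum_n\prod_{i=0}^n P(I\le i)$ converge, so Lemma \ref{lem:fire_bifire}(i) cannot be invoked. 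This subcase is not vacuous for the paper: it is precisely what Theorem \ref{teo:<0.5} uses when $E(\eta)=\infty$ (there $\alpha_0=0$ and the liminf produced in that proof is merely positive, not infinite). The repair is a truncation argument: for every $M\in\mathbb{N}$, monotonicity gives $1-P(I\le i)\ge 1-E\bigl[(1-P(R>i))^{N\wedge M}\bigr]$, and dominated convergence (with the bound $N\wedge M\le M$) shows this is asymptotically $E(N\wedge M)\,P(R>i)$ as $i\to\infty$; since $E(N\wedge M)\uparrow E(N)=\infty$ by monotone convergence, we get $\liminf_i i\bigl(1-P(I\le i)\bigr)\ge \delta\,E(N\wedge M)$ for every $M$, hence the liminf equals $+\infty$, and your product bound then applies with any exponent $c''>1$, closing the gap.
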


\begin{proof}
The proof of every statement regarding the firework process can be found in \cite[Proposition 3.2]{firework_stations}, and combines Lemma $\ref{lem:fire_bifire}$ (i) with Kummer's test for series convergence. The extension to the bi-directional version follows from analogous arguments and calculations. In fact, one can also extend the results from the firework process to the bi-directional case without further calculations. This is achieved by noting that, by the same reasoning as in the proof of Lemma \ref{lem:fire_bifire} and inequality (\ref{eq:bfw_fw}), 
\begin{equation}\label{eq:inclusao}
\{\text{FW}(I)\text{ percolates}\}\subset\{\text{BFW}(I)\text{ percolates}\}\subset \{\text{FW}(I^*)\text{ percolates}\},\end{equation} when we consider a coupling in which, for $z\in\mathbb{Z}$, \[I_z=\max_{i\in\{1,...,N_z\}}R_{z,i}\qquad\text{and}\qquad I^*_z=\max\{I_{-z},I_z\}\overset{d}{=}\max_{i\in\{1,...,N^*_z\}}R_{z,i}\] with $N^*_0=N_0$ and $N^*_z=N_{-z}+N_z$ for $z\in \mathbb{Z}\setminus\{0\}$. 

The bi-directional part of item (iii) comes directly from the uni-lateral part of the same item together with the first inclusion in (\ref{eq:inclusao}). Item (i) follows from item (ii) together with the second inclusion in (\ref{eq:inclusao}), since $E(N^*_x)=2E(N_x)$ for $x\in\mathbb{N}$, and we may simply add more particles at vertex $0$ (which only helps percolation) to make $E(N^*_0)=2E(N_0)$.
\end{proof}

We now make use of comparisons between the frog model and both the firework and the bi-directional firework processes to prove Proposition \ref{prop:bz13}.

\begin{proof}[Proof of Proposition \ref{prop:bz13}]

We establish a connection between the frog model and FW$(I)$ and BFW$(I)$, both defined with $I$ as a function of $N$ and $R$, as in (\ref{eq:i_n_r}).  

On the one hand, when $N_z=\eta_z$ and $R_{z,i}=D_{z,i}^{*}$, BFW$(I)$ dominates $\text{FM}(\mathbb{Z},L,\eta)$, meaning that  
\[
P(\text{BFW$(I)$ percolates})\geq P(\text{FM}(\mathbb{Z},L,\eta)\text{ survives}).
\]  
This dominance holds because we overestimate the number of vertices visited by each particle's random walk. This relationship, together with Lemma $\ref{lem:liminf_limsup}$ (i), prove Proposition \ref{prop:bz13} (i).

On the other hand, when $N_z=\eta_z$ and $R_{z,i}=D_{z,i}^{\rightarrow}$, FW$(I)$ is dominated by $\text{FM}(\mathbb{Z},L,\eta)$, meaning that  
\[
P(\text{FW$(I)$ percolates})\leq P(\text{FM}(\mathbb{Z},L,\eta)\text{ survives}).
\]  
This dominance holds because we consider only the rightward portion of each particle's random walk. This relationship, together with Lemma $\ref{lem:liminf_limsup}$ (iii), prove Proposition \ref{prop:bz13} (ii).
\end{proof}

To apply Proposition \ref{prop:bz13}, we need estimates for the distribution of $D^\rightarrow$. In the geometric case, we use the following useful result.

\begin{lem}[\cite{improved_upper_bound}, Lemma 2.1]\label{lem:dist_d} Consider $\text{FM}(\mathbb{Z},L,\eta)$ with $L=\mathcal{G}_\pi$. For every $z\in\mathbb{Z}$, $i\in\mathbb{N}$ and $p\in (0,1)$,
    \[P(D_{z,i}^\rightarrow \geq n|\pi_{z,i}=p)=\bigg(\frac{1-\sqrt{1-p^2}}{p}\bigg)^n.\]
\end{lem}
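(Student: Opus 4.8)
The goal is to compute the tail distribution of the rightward maximum displacement
\[
D^\rightarrow_{z,i}=\max_{0\le n\le L_{z,i}}\{S^{z,i}_n-z\}
\]
conditioned on the survival parameter $\pi_{z,i}=p$, where the lifetime $L_{z,i}$ is geometric with parameter $1-p$. The plan is to suppress the fixed indices $z,i$ and work with a single random walk $(S_n)$ started at $0$ together with an independent geometric clock $L$ satisfying $P(L=k\mid\pi=p)=(1-p)p^k$ for $k\in\mathbb{N}_0$. The event $\{D^\rightarrow\ge n\}$ is exactly the event that the walk reaches level $n$ at some time $m\le L$, i.e. that the first passage time $T_n:=\inf\{m:S_m=n\}$ satisfies $T_n\le L$.

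First I would condition on the walk and integrate out the clock: since $L$ is geometric and independent of $(S_m)$, for any fixed time $m$ we have $P(L\ge m\mid\pi=p)=p^m$. Hence
\[
P(D^\rightarrow\ge n\mid\pi=p)=E\!\left[p^{T_n}\right],
\]
where the expectation is over the first passage time $T_n$ of the simple symmetric walk to level $n$ (with $T_n<\infty$ a.s. but possibly having heavy tails, which is harmless because $p<1$). So the whole problem reduces to computing the probability generating function $E[p^{T_n}]$ of the first passage time to a positive integer level.

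The key step is the standard hitting-time generating function identity for the simple symmetric random walk on $\mathbb{Z}$. By the strong Markov property and the spatial homogeneity of the walk, first passage to level $n$ decomposes into $n$ independent copies of first passage to level $1$, so
\[
E\!\left[p^{T_n}\right]=\left(E\!\left[p^{T_1}\right]\right)^n .
\]
It then remains to evaluate $\phi(p):=E[p^{T_1}]$ for the one-step-upward passage time. This satisfies the quadratic obtained from first-step analysis, $\phi=\tfrac{p}{2}+\tfrac{p}{2}\phi^2$ (one step right finishes immediately; one step left requires climbing back up, contributing $\phi^2$), whose analytic root in $[0,1]$ is $\phi(p)=\frac{1-\sqrt{1-p^2}}{p}$. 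Substituting gives
\[
P(D^\rightarrow\ge n\mid\pi=p)=\left(\frac{1-\sqrt{1-p^2}}{p}\right)^{n},
\]
matching the claimed formula.

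The only genuine subtlety, and the step I would be most careful about, is the selection of the correct branch of the quadratic: the equation $\phi=\tfrac p2+\tfrac p2\phi^2$ has two roots $\frac{1\pm\sqrt{1-p^2}}{p}$, and one must argue that the generating function is the smaller one. This is justified either by continuity and the boundary behaviour ($\phi(p)\to 0$ as $p\to 0$, since reaching level $1$ before a short-lived clock becomes unlikely, while the other root blows up), or by noting $\phi(p)\le 1$ for a probability generating function evaluated at $p\le 1$ whereas $\frac{1+\sqrt{1-p^2}}{p}\ge 1$. Since this lemma is quoted verbatim from \cite{improved_upper_bound}, I would present the derivation compactly and lean on that reference for the branch-selection technicalities rather than belabouring the recurrence-versus-analytic-root argument.
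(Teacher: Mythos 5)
Your proposal is correct, but note that the paper does not prove this statement at all: Lemma \ref{lem:dist_d} is imported verbatim from the cited reference (\cite{improved_upper_bound}, Lemma 2.1) and used as a black box. Your derivation supplies a complete, self-contained proof of that imported fact, and every step is sound: the reduction $P(D^\rightarrow\geq n\,|\,\pi=p)=E[p^{T_n}]$ is valid because the geometric clock is independent of the walk and $P(L\geq m\,|\,\pi=p)=p^m$ (and the nearest-neighbour property guarantees that exceeding level $n$ is the same as hitting $n$); the factorization $E[p^{T_n}]=(E[p^{T_1}])^n$ follows from the strong Markov property and translation invariance; and the first-step equation $\phi=\tfrac{p}{2}+\tfrac{p}{2}\phi^2$ with the branch selection $\phi\leq 1$ (ruling out $\frac{1+\sqrt{1-p^2}}{p}>1$) gives exactly $\frac{1-\sqrt{1-p^2}}{p}$. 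This is the classical generating-function computation for the first ascending ladder epoch of the simple symmetric random walk, which is essentially what the cited reference proves; so your argument does not diverge from the intended justification, it merely makes explicit what the paper delegates to a citation.
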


\subsection{Proof of Theorems} \label{sec:demo_z}

We begin this section by stating an auxiliary lemma, whose proof follows from elementary calculus and is therefore omitted.

\begin{lem}\label{lema:euler}
For every $1<\gamma<2$, the following items hold
\begin{itemize}
    \item[(i)] $\lim_{n\to\infty}{(1-{n^{-\gamma}})^{-n}}=1$
    \item[(ii)] $\lim_{n \to \infty}n(1-n^{-\gamma/2})^n=0$
    \item[(iii)] $\lim_{n\to\infty}n\frac{(1-{n^{-\gamma/2}})^n
}{{(1-{n^{-\gamma}})^n}}=0$
\end{itemize}
\end{lem}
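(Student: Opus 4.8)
The plan is to prove Lemma~\ref{lema:euler} by reducing each limit to a standard exponential-form computation, since all three items concern the behavior of expressions of the form $(1-a_n)^{n}$ where $a_n\to 0$.

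\textbf{Item (i).} First I would take logarithms and write $(1-n^{-\gamma})^{-n}=\exp\!\big(-n\log(1-n^{-\gamma})\big)$. Using the expansion $\log(1-x)=-x+O(x^{2})$ as $x\to 0$, the exponent becomes
\[
-n\log(1-n^{-\gamma})=n\big(n^{-\gamma}+O(n^{-2\gamma})\big)=n^{1-\gamma}+O(n^{1-2\gamma}).
\]
Since $\gamma>1$, we have $1-\gamma<0$, so the exponent tends to $0$ and the whole expression tends to $\exp(0)=1$. Note that this argument uses only $\gamma>1$, not the full range.

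\textbf{Item (ii).} Here I would again pass to logarithms, writing $n(1-n^{-\gamma/2})^{n}=\exp\!\big(\log n+n\log(1-n^{-\gamma/2})\big)$. The dominant contribution of the second term is
\[
n\log(1-n^{-\gamma/2})=-n^{1-\gamma/2}+O(n^{1-\gamma}).
\]
The key observation is that $1<\gamma<2$ forces $0<\gamma/2<1$, hence $1-\gamma/2\in(0,1)$, so $n^{1-\gamma/2}$ grows as a positive power of $n$ and therefore dominates the $\log n$ term. Thus the exponent $\log n - n^{1-\gamma/2}+o(1)\to-\infty$, giving limit $0$. This is the one step where both inequalities $\gamma>1$ and $\gamma<2$ are genuinely needed: $\gamma<2$ guarantees the decay $n^{1-\gamma/2}\to\infty$ that beats $\log n$.

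\textbf{Item (iii).} This follows by combining the first two. I would write the quotient as
\[
n\,\frac{(1-n^{-\gamma/2})^{n}}{(1-n^{-\gamma})^{n}}
=\Big[n(1-n^{-\gamma/2})^{n}\Big]\cdot\big(1-n^{-\gamma}\big)^{-n},
\]
and then invoke items (ii) and (i): the first bracket tends to $0$ and the second factor tends to $1$, so the product tends to $0$. The main (and only mild) obstacle across all three parts is ensuring the error terms in the logarithmic expansions are genuinely negligible against the leading terms; this is routine once one tracks that $1-\gamma/2$ stays strictly between $0$ and $1$, which is exactly what the hypothesis $1<\gamma<2$ provides. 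Since everything reduces to elementary estimates of this kind, the result is immediate and the detailed bookkeeping can be omitted.
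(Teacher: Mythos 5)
Your proof is correct. Note that the paper itself omits the proof of this lemma, dismissing it as "elementary calculus," so there is no argument to compare against; your logarithmic-expansion approach (write each expression as $\exp(\cdot)$, expand $\log(1-x)=-x+O(x^2)$, and track which power of $n$ dominates) is precisely the routine computation the authors had in mind, and your reduction of item (iii) to the product of items (ii) and (i) is the natural way to organize it. One small imprecision: in item (ii) only $\gamma<2$ is genuinely needed, since $\log(1-x)\leq -x$ gives $n(1-n^{-\gamma/2})^n\leq n e^{-n^{1-\gamma/2}}\to 0$ directly (the hypothesis $\gamma>1$ merely makes your error term $O(n^{1-\gamma})$ vanish; without it the error term is still negative and hence harmless), whereas $\gamma>1$ is what item (i) requires --- but under the lemma's stated hypotheses this does not affect the validity of your argument.
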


We now proceed to the proofs of the theorems.

\begin{proof}[Proof of Theorem \ref{teo:>1}] 

Note that $\{D^*_{x,i}\geq n \}=\{D^{\rightarrow}_{x,i}\geq n\}\cup \{D^{\leftarrow}_ {x,i}\geq n \}$ and that $D^{\rightarrow}_{x,i}$ and $D^{\leftarrow}_{x,i}$ are identically distributed. Therefore, 
\begin{equation}\label{eq:d*/2}
P(D^{\leftarrow}_{x,i}\geq n)=P(D^{\rightarrow}_{x,i}\geq n)\geq \frac{1}{2}P(D^*_{x,i}\geq n ).
\end{equation}

So, the proof is complete by Proposition $\ref{prop:bz13}$ (i) if we show that
\[\lim_{n\to \infty}nP(D^\rightarrow \geq n)=0.\]

Recall that $P(D_{x,i}^\rightarrow \geq n|\pi_{x,i}=p)=(\frac{1-\sqrt{1-p^2}}{p})^n$ by Lemma \ref{lem:dist_d} and note that $\sqrt{1-x^2}=\sqrt{(1+x)(1-x)}\geq \sqrt{1-x}$ for $x\in(0,1)$. Therefore, for any $z\in(0,1)$,
\begin{equation}\label{eq:2_integr}
\begin{aligned}
nP(D^\rightarrow \geq n)
&\leq \int_{0}^z n\bigg(\frac{1-\sqrt{1-x}}{x}\bigg)^n\frac{x^{\alpha-1}(1-x)^{\beta-1}}{B(\alpha,\beta)}dx\\
&\hspace{5mm}+\int_{z}^1 n\bigg(\frac{1-\sqrt{1-x}}{x}\bigg)^n\frac{x^{\alpha-1}(1-x)^{\beta-1}}{B(\alpha,\beta)}dx
\end{aligned}
\end{equation}

The idea is to choose $z=z(n)\in(0,1)$ such that we are able to show that both integrals in (\ref{eq:2_integr}) go to 0 when $n\to \infty$. Note that $(\max\{\frac{1}{\beta},1\},2)$ is a non-empty interval as $\beta>0.5$ and let $z:=1-\frac{1}{n^{\gamma}}$ with $\gamma\in (\max\{\frac{1}{\beta},1\},2)$.

Since $\frac{1-\sqrt{1-x}}{x}$ is an increasing function of $x\in(0,1)$, we use Lemma \ref{lema:euler} (iii) to conclude that
\[\begin{aligned}
\int_{0}^z n\bigg(\frac{1-\sqrt{1-x}}{x}\bigg)^n\frac{x^{\alpha-1}(1-x)^{\beta-1}dx}{B(\alpha,\beta)}dx &\leq n\bigg(\frac{1-\sqrt{1-z}}{z}\bigg)^n \int_{0}^z\frac{x^{\alpha-1}(1-x)^{\beta-1}dx}{B(\alpha,\beta)}dx\\
&\leq n\bigg(\frac{1-\sqrt{1-z}}{z}\bigg)^n\\
&= n\frac{(1-\frac{1}{n^{\gamma/2}})^n
}{{(1-\frac{1}{n^\gamma})^n}}\rightarrow 0.
\end{aligned}\]

We have that $\big(\frac{1-\sqrt{1-x}}{x}\big)^n x^{\alpha-1}\leq x^{\alpha-1-n}\leq z^{\alpha-1-n}$ for any $x\in(z,1)$ and $n>\alpha-1$. Moreover, $1-\gamma\beta<0$ as $\gamma\in(\max\{\frac{1}{\beta},1\},2)$. Therefore, we take a sufficiently large $n$ and use Lemma \ref{lema:euler} (i) to conclude that
\[\begin{aligned}
\int_{z}^1 n\bigg(\frac{1-\sqrt{1-x}}{x}\bigg)^n\frac{x^{\alpha-1}(1-x)^{\beta-1}}{B(\alpha,\beta)}dx&\leq\frac{nz^{\alpha-1-n}}{B(\alpha,\beta)} \int_{z}^1 (1-x)^{\beta-1}dx\\
&=\frac{nz^{\alpha-1-n}}{B(\alpha,\beta)} \frac{(1-z)^\beta}{\beta}\\
&=\frac{n^{1-\gamma\beta}(1-\frac{1}{n^\gamma})^{\alpha-1-n}}{B(\alpha,\beta)\beta}\rightarrow 0,
\end{aligned}\]
completing the proof.
\end{proof}

\begin{proof}[Proof of Theorem \ref{teo:<0.5}]

Let $c_1>c_2>...>c_k\geq 0$ be a sequence of constants and $z_i=z_i(n):=1-\frac{c_i}{n^2}$ for $i\in\{1,2,...,k\}$. Let
\[I_{n,\alpha,\beta}(i):=n\int_{z_i}^{z_{i+1}} \bigg(\frac{1-\sqrt{1-x^2}}{x}\bigg)^n\frac{x^{\alpha-1}(1-x)^{\beta-1}}{B(\alpha,\beta)}dx.\]

By Lemma \ref{lem:dist_d} and the superadditivity property of the limit inferior,
\begin{equation}\label{eq:liminf_np}
\liminf_{n\to \infty} nP(D^\rightarrow \geq n)\geq \liminf_{n\to \infty}\sum_{i=1}^{k-1}I_{n,\alpha,\beta}(i) \geq \sum_{i=1}^{k-1}\liminf_{n\to \infty} I_{n,\alpha,\beta}(i).
\end{equation}

For any $0<z\leq x\leq 1$ and $n\geq\alpha-1$, we have that $$\bigg(\frac{1-\sqrt{1-x^2}}{x}\bigg)^nx^{\alpha-1}\geq \frac{\big(1-\sqrt{(1-z)(1+z)}\big)^n}{x^{n+1-\alpha}}\geq \big(1-\sqrt{2(1-z)}\big)^n.$$ So, for sufficiently large $n$,
\begin{equation}
\begin{aligned}\label{eq:i_int}
I_{n,\alpha,\beta}(i)&\geq \frac{n\big(1-\sqrt{2(1-z_i)}\big)^n}{B(\alpha,\beta)}\int_{z_i}^{z_{i+1}} (1-x)^{\beta-1}dx\\
&= \frac{n\big(1-\sqrt{2(1-z_i)}\big)^n}{B(\alpha,\beta)}\bigg[\frac{(1-z_i)^\beta}{\beta}-\frac{(1-z_{i+1})^\beta}{\beta}\bigg]\\
&\geq \frac{(c_i^\beta-c_{i+1}^\beta)}{B(\alpha,\beta)\beta}n^{1-2\beta}\bigg(1-\frac{\sqrt{2c_i}}{n}\bigg)^n.
\end{aligned}\end{equation}

When $\beta<0.5$, we have by $(\ref{eq:i_int})$ that $\liminf_{n\to \infty}I_{n,\alpha,\beta}(1)=\infty$ for any choice of $c_1>c_2\geq 0$. We combine (\ref{eq:liminf_np}) and Proposition \ref{prop:bz13} (ii) to conclude that $P(\text{FM}(\mathbb{Z},\mathcal{G}_\pi,\eta)\text{ survives})>0$ for any value of $E(\eta)\neq 0$.

Now we deal with the case $\beta=0.5$. By (\ref{eq:i_int}), we have that
\begin{equation}\label{eq:liminf_i}
\liminf_{n\to\infty} I_{n,\alpha,0.5}(i)\geq \frac{2(\sqrt{c_i}-\sqrt{c_{i+1}})}{B(\alpha,0.5)}e^{-\sqrt{2c_i}}.
\end{equation}

Note that $B(\alpha,0.5)=\int_0^1x^{\alpha-1}(1-x)^{-0.5}dx$ is decreasing in $\alpha$, and that $\lim_{\alpha\to \infty}B(\alpha,0.5)=0$ as an immediate consequence of the Dominated Convergence Theorem. This implies that $\alpha_0(E(\eta)):=
\inf\{\alpha>0:B(\alpha,0.5)<E(\eta)\sqrt{2}\}$ is well-defined for $E(\eta)\neq 0$ (it is the infimum of a non-empty set). For all $\alpha>\alpha_0$, this also implies that $B(\alpha,0.5)<E(\eta)\sqrt{2}$ and that there exists $\gamma=\gamma(\alpha)<1$ such that \begin{equation}\label{ineq:b_0.5}
B(\alpha,0.5)<\gamma E(\eta)\sqrt{2}.\end{equation}

Note that
\[\lim_{m\to\infty}\frac{1}{m}\sum_{i=1}^\infty e^{-\frac{i\sqrt{2}}{m}}=\lim_{m\to\infty}\frac{e^{-\frac{\sqrt{2}}{m}}}{m\big(1-e^{-\frac{\sqrt{2}}{m}}\big)}=\frac{1}{\sqrt{2}},\]
where $\lim_{m\to\infty} m(1-e^{-\frac{\sqrt{2}}{m}})=\sqrt{2}$ can be calculated by the L'Hospital rule. So, as $\gamma(\alpha)<1$, there exists $m$ and $k$ such that 
\begin{equation}\label{eq:ineq_k_m}
\frac{1}{m}\sum_{i=1}^{k-1} e^{-\frac{i\sqrt{2}}{m}}>\frac{\gamma}{\sqrt{2}}.\end{equation}

Consider such $m$ and $k$ and take $c_i=(\frac{k-i}{m})^2$ for $i\in \{1,2,...,k\}$. We combine (\ref{eq:liminf_i}), (\ref{ineq:b_0.5}) and (\ref{eq:ineq_k_m}) to obtain for $\alpha>\alpha_0$ that
\[\sum_{i=1}^{k-1}\liminf_{n\to \infty}I_{n,\alpha,0.5}(i)\geq\frac{2}{B(\alpha,0.5)}\frac{1}{m}\sum_{i=1}^{k-1} e^{-\frac{i\sqrt{2}}{m}}> \frac{1}{E(\eta)},\]
where $\frac{1}{E(\eta)}:=0$ when $E(\eta)=\infty$. Again, we use (\ref{eq:liminf_np}) and Proposition \ref{prop:bz13} (ii) to conclude that $P(\text{FM}(\mathbb{Z},\mathcal{G}_\pi,\eta)\text{ survives})>0$ for $\beta=0.5$ and $\alpha>\alpha_0$.
\end{proof}

\begin{proof}[Proof of Theorem \ref{teo:recor}]

Since $E(\eta)<\infty$ and $P(\text{FM}(\mathbb{Z},L,\eta)]\text{ survives})>0$ by assumption, we have by Proposition \ref{prop:bz13} (i) that
\[\limsup_{n\to \infty} n P(D^{*}\geq n)\geq\frac{1}{2E(\eta)}.\]

Therefore, there exists an infinite increasing sequence $(a_i)_{i\in\mathbb{N}}$ such that $a_i P(D^{*}\geq a_i)\geq \frac{1}{4E(\eta )}$ for all $i\in\mathbb{N}$. It is possible to create a subsequence $(b_i)_{i\in\mathbb{N}}$ such that, in addition to $b_i P(D^{*}\geq b_i)\geq \frac{1}{4E( \eta)}$, we also have that $b_{i+1}\geq 2 b_{i}$ for all $i\in\mathbb{N}$. Note that $P(D^*\geq j)\geq P(D^*\geq b_{i+1})\geq\frac{1}{4E(\eta)b_{i+1}}$ holds for every $j\in \{b_{i}+1,...,b_{i+1}\}$, since $P(D^*\geq j)$ is non-increasing. So, we have that
\begin{equation}\label{eq:infinity}
    \begin{aligned}
\sum_{n=1}^{\infty} P(D^{*}\geq n)&\geq \sum_{i=1}^{\infty}\sum_{j=b_{i}+1}^{b_{i+1}}P(D^{*}\geq j)\\
&\geq \sum_{i=1}^{\infty}\frac{(b_{i+1}-b_i)}{4E(\eta)b_{i+1}}\\
&\geq \sum_{i=1}^{\infty}\frac{1}{8E(\eta)}=\infty.
\end{aligned}\end{equation}

We use ($\ref{eq:d*/2}$), ($\ref{eq:infinity}$) and the independence between $\eta_x$ and $D^\leftarrow_{x,1}$ to conclude that
\[\sum_{x=1}^{\infty} P(\eta_x\geq 1, D_{x,1}^{\leftarrow }\geq x)\geq \frac{P(\eta\geq 1)}{2}\sum_{x=1}^{\infty} P(D_{x,1}^{* }\geq x)=+\infty\]
and, by the Borel-Cantelli lemma, that
\begin{equation}\label{eq:nd_io}
P(\eta_x\geq 1 \text{ and } D_{x,1}^{\leftarrow }\geq x \text{ for infinitely many } x\in\mathbb{N})=1.
\end{equation}

Under the event (\ref{eq:nd_io}), there exist infinitely many vertices in $\mathbb{N}$ that contain at least one particle capable of taking enough steps to the left to revisit vertex 0.  

Thus, given that the frog model survives on the positive side (i.e., every vertex $x\in\mathbb{N}$ is visited by some particle), it follows that  
\[
V_0(\text{FM}(\mathbb{Z},L,\eta))=\infty
\]
with probability 1.  

Moreover, by (\ref{eq:nd_io}) and symmetry, we have that  
\[
P(\eta_{x}\geq 1 \text{ and } D_{x,1}^{\rightarrow }\geq -x \text{ for infinitely many } x\in \mathbb{Z}\setminus\mathbb{N}_0)=1.
\]
Similarly to the previous case, given that the frog model survives on the negative side (i.e., every vertex $x\in \mathbb{Z}\setminus\mathbb{N}_0$ is visited by some particle), it follows that  
\[
V_0(\text{FM}(\mathbb{Z},L,\eta))=\infty
\]
with probability 1.  

This completes the proof, as the survival of the frog model implies survival on either the positive or the negative side.

\end{proof}

\section*{Acknowledgements}
We thank the anonymous referees for their comments and criticisms, which greatly helped us improve the manuscript. This study was financed in part by the Coordenação de Aperfeiçoamento de Pessoal de Nível Superior - Brasil (CAPES) - Finance Code 001. It was also partially supported by FAPESP (grant 2023/13453-5).

\bibliographystyle{alpha}
\bibliography{sample}

\end{document}